\documentclass[amssymb,twoside,12pt]{article}
\thispagestyle{plain}\pagestyle{myheadings}
\markboth{\small{}}{\small{}}
\usepackage{latexsym,amsmath,graphicx,mathrsfs,amssymb}
\usepackage{amsfonts}
\usepackage{enumitem}
\topmargin -2.5cm \evensidemargin 0.25cm \oddsidemargin 0.25cm
\textheight 10.0 in \textwidth 6.7in
\newtheorem{theorem}{Theorem}[section]
\newtheorem{lemma}[theorem]{{\bf Lemma}}

\newtheorem{rem}[theorem]{{\bf Remark}}

\newtheorem{definition}{Definition}[section]
\numberwithin{equation}{section}
\newenvironment{proof}{\indent{\em Proof:}}{\quad \hfill
$\Box$\vspace*{2ex}}

\setlength{\parindent}{0.25in} \setlength{\parskip}{0.035in}
\begin{document}
\setcounter{page}{1}
\begin{center}
\vspace{0.3cm} {\large{\bf Mild and classical solutions for fractional evolution differential equation}} \\
\vspace{0.4cm}
 J. Vanterler da C. Sousa $^{1}$ \\
vanterler@ime.unicamp.br \\

\vspace{0.30cm}
Thabet Abdeljawad $^{2}$\\
tabdeljawad@psu.edu.sa\\

\vspace{0.30cm}
 D. S. Oliveira $^{3}$\\
oliveiradaniela@utfpr.edu.br\\
\vspace{0.35cm}

\vspace{0.30cm}
$^{1}$ Department of Applied Mathematics, Imecc-Unicamp, 13083-859, Campinas, SP, Brazil. \\
$^{2}$ Department of Mathematics and General Sciences, Prince Sultan University, P. O. Box 66833, 11586 Riyadh, Saudi Arabia.\\

$^{3}$  Coordination of Civil Engineering, Technological Federal University of Parana, 85053-525, Guarapuava, PR, Brazil.
\end{center}

\def\baselinestretch{1.0}\small\normalsize
\begin{abstract}
Investigating the existence, uniqueness, stability, continuous dependence of data among other properties of solutions of fractional differential equations, has been the object of study by an important range of researchers in the scientific community, especially in fractional calculus. And over the years, these properties have been investigated more vehemently, as they enable more general and new results. In this paper, we investigate the existence and uniqueness of a class of mild and classical solutions of the fractional evolution differential equation in the Banach space $\Omega$. To obtain such results, we use fundamental tools, namely: Banach contraction theorem, Gronwall inequality and the $\beta$-times integrated $\beta$-times integrated $\alpha$-resolvent operator function of an $(\alpha,\beta)$-resolvent operator function.

\end{abstract}
\noindent\textbf{Key words:} Fractional evolution differential equation, existence and uniqueness, mild and classical solutions, Banach contraction theorem, Gronwall inequality. \\
\noindent
\textbf{2010 Mathematics Subject Classification:} 26A33, 34A08, 34A12,	34G20, 47Dxx.
\allowdisplaybreaks
\section{Introduction}

It is common to hear from several researchers that better results were obtained when working with derivatives and fractional integrals \cite{18,12,16,oliveira,samko,ze,ze5,ze4}. In fact, as time goes by the theory of fractional calculus has been increasingly consolidated, with well-founded results and consequently applicable results that allow us to better describe reality. Some papers are suggested for a brief read on some applications \cite{13,Frunzo,Guo24,Heydari,samko}.

Differential equations have long appeared in many branches of engineering, physics, among others, and many concepts and methods have been developed to solve various differential equations, as we know they are fundamental tools for describing physical phenomena such as diffusion processes, erythrocyte sedimentation rate, world population growth, computational models, among others \cite{Almeida1,Almeida,Baba,20,19,ze8,ze9,Srivastava,Zafar,17}. On the other hand, we also have an exponential growth in the interest in the existence, uniqueness, stability and continuous dependence of the data of mild, classical, strong solutions of fractional differential equations of the type: functional, impulsive, evolution, among others \cite{Jia1,Jia,ze3,ze6,ze7,Shu,Zhang34}. Therefore, in recent years considerable attention has been given to investigating various types of fractional differential equations and certainly will continue to be studied for many long years.

In 2010 Zhang et. al. \cite{Zhang} investigated the existence and uniqueness of mild solutions for fractional neutral impulsive functional infinite delay integrodifferential systems with nonlocal initial conditions, through the fixed point theorem with the strongly continuous operator semigroup theory. In the same year, Debbouche \cite{Debbouche} presented a similar work, however, involving the existence and uniqueness of classical solutions for a class of nonlinear fractional evolution systems with nonlocal conditions in Banach space. In 2013, Hern\'{a}ndez et al. \cite{Regan} elaborated a work on some errors about abstract fractional differential equations, i.e., they proposed a different approach to this kind of problem involving the existence of mild solutions for a class of abstract fractional differential equations with nonlocal conditions. In 2014, Dubey and Sharma \cite{Dubey} decided to present a paper on mild and classical solutions for fractional functional differential equations with non-local conditions, highlighting the existence and uniqueness of global mild solutions in Banach space using the Banach fixed point theorem. In the same year, Fan \cite{Fan} investigated the existence and uniqueness of mild and strong solutions for a fractional evolution differential equation introduced via the Riemann-Liouville fractional derivative in Banach space.

On the other hand, we highlight the work that the authors Al-Omari and Al-Saadi did in 2018 \cite{Omari}, investigating the existence and uniqueness of mild and strong solutions for a fractional semilinear evolution equation using the Banach fixed point method and the theory of semi-groups. In this sense, innumerable works on existence, uniqueness, attractivity and other properties of solutions of fractional differential equations, have been published, and consolidates this field. As there is a wide range of interesting and relevant work, we suggest some of these for further reading \cite{Dong,Haibo,14,Guswanto,Katatbeh,Li2,Losada,Zhang2}.

Although, there are many works that contribute to the leverage of the scientific field, the area of fractional differential equations requires more and needs more general mathematical tools that can actually open new and relevant doors. For example, the importance of obtaining an inverse Laplace transform with respect to another function is of paramount importance to construct a new theory of mild, strong, weak, classic solutions for fractional differential equations \cite{Jarad}. In this sense, from the readings of the works reported above, we were motivated to perform a work on the existence and uniqueness of mild solutions of the fractional functional differential evolution equations in the sense of the Hilfer fractional derivative in the Banach space $\Omega$.

Consider fractional functional differential evolution equations given by
\begin{equation}\label{eq.4}
^{H}\mathbb{D}_{t_{0}^{+}}^{\alpha ,\beta}u(t)+\mathcal{A}u(t)=f\left(
t,u(t),u(b_{1}(t)),\cdots ,u(b_{r}(t))\right) ,\,\,t\in J/\{t_{0}\}
\end{equation}
satisfying the condition
\begin{equation}\label{eq.41}
I_{t_{0^{+}}}^{1-\gamma}u(t_{0}^{+})+\sum_{k=1}^{p}C_{k}I_{t_{0^{+}}}^{1-\gamma }u(t_{k})=u_{0}
\end{equation}
where ${}^H\mathbb{D}_{t_{0}^{+}}^{\alpha ,\beta }(\cdot )$ is the Hilfer fractional derivative of order $0<\alpha \leq 1$ and type $0\leq \beta \leq 1$, $I_{t_{0^{+}}}^{1-\gamma }(\cdot )$ is the Riemann-Liouville fractional integral of order $1-\gamma (\gamma =\alpha +\beta (1-\alpha ))$, 
$0\leq \gamma \leq 1$ $f$ and $b_{i}$\, $(i=1,2,\cdots ,r)$ are given functions satisfying some assumptions $u_{0}\in \Omega$, $C_{k}\neq 0$\,$(k=1,2,\cdots ,p)$ and $p,r\in \mathbb{N}$.

Here we perform a rigorous analysis of Eq.(\ref{eq.4}) and our main strategy and results can be
summarized as follows:
\begin{enumerate}
   \item A new class of mild and classic solutions has been introduced as we are free to choose $0<\alpha\leq 1$ e $0\leq \beta\leq 1$;
   
   \item The mild and classic solutions are well defined as noted in remark 1;
   
    \item We investigate the existence and uniqueness of mild solutions for the fractional functional differential evolution equation towards Hilfer using the Banach contraction theorem, according to the Theorem \ref{teo2};
    
    \item Also, we investigated the existence and uniqueness of classical solutions for the fractional functional differential evolution equation in the Hilfer sense using the Gronwall inequality, according to the Theorem \ref{teo31} and Theorem \ref{teo65};
    
    \item From the choice of $0<\alpha\leq1$ and the limits of $\beta\rightarrow 1$ or $\beta\rightarrow 0$ in Eq.(\ref{eq.4}), we have a class of fractional functional differential evolution equation as particular cases. Especially, when we choose $\alpha=1$ and one of the limits of $\beta\rightarrow 1$ or $\beta\rightarrow 0$, we have the integer case. As the solution is directly related to the fractional differential equation investigated, therefore, we also have their respective particular cases, i.e., the results investigated here, are also valid for their respective particular cases, preserving the investigated properties;
\end{enumerate}

This article is organized as follows. In section 2, we present spaces with their respective norms that we will work on, and Riemann-Liouville fractional integral definitions with respect to another function and $\psi$-Hilfer fractional derivative. In this sense, we present the concept of $\beta$-times integrated $\alpha$-resolvent operator function of an $(\alpha,\beta)$-resolvent operator function, Banach contraction theorem and Gronwall inequality, fundamental to achieve the proposed results. In section 3, we investigate our first and main result, namely, the existence and uniqueness of mild solutions for Eq. (\ref{eq.4}) towards the Hilfer fractional derivative using the Banach contraction theorem. To finalize the work, section 4, is intended for the second main result, to investigate the existence and uniqueness of classical solutions for Eq.(\ref{eq.4}) via the Gronwall inequality Lemma. Concluding remarks closing the paper.

\section{Preliminaries}

Let the interval $J'=[0,a]$. The weighted space of continuous functions is given by \cite{ze1,ze2}
\begin{equation*}
C_{1-\gamma}(J',\Omega)= \left\{ \psi \in C(J',\Omega), \, t^{1-\gamma} u(t) \in C(J',\Omega) \right\} 
\end{equation*}
where $0 \leq \gamma \leq 1$, with norm
\begin{equation*}
\begin{array}{rll}
\left\Vert u \right\Vert_{C_{1-\gamma}} & = & \displaystyle \sup_{t \in I} \left\Vert t^{1-\gamma} u(t)\right\Vert.
\end{array}
\end{equation*}

Note that $\Omega$ be a Banach space with norm $\left\Vert \cdot \right\Vert _{C_{1-\gamma }}$ and let $\mathcal{A}:\Omega\rightarrow \Omega$ be a closed densely-defined linear operator. For an operator $\mathcal{A}$, let  $D\left( \mathcal{A},\rho \left( \mathcal{A}\right) \right) $ and $\mathcal{A}^{\ast }$ denotes its domain, resolvent set and adjoint, respectively \cite{Chuang10}.

For a Banach space $\Omega$, $\mathscr{L}(\Omega)$ denotes the set of closed linear operator from $\Omega$ into itself. We shall need the class $G(\widetilde{M},\beta)$ of operators $\mathcal{A}$ satisfying the conditions: There exist constants $\widetilde{M} > 0$ and $\beta \in \mathbb{R}$ such that
\begin{enumerate}
\item $\mathcal{A} \in \mathscr{L}(\Omega)$, $\overline{D(\mathcal{A} )} = \Omega$ and $(\beta,+\infty) \subset \rho(-\mathcal{A})$;

\item $\left\Vert (\mathcal{A} + \xi)^{-k}\right\Vert \leq \widetilde{M} (\xi - \beta)^{-k}
$ for each $\xi > \beta$ and $k=1,2,\cdots$.
\end{enumerate}

{\bf Assumption A:} The adjoint operator $\mathcal{A}^{\ast }$ is densely defined in $\Omega^{\ast }$, i.e., $\overline{D(\mathcal{A}^{\ast })}= \Omega^{\ast }$.

\begin{definition}\cite{Chuang10} Let $\alpha > 0$ and $\beta \geq 0$. A function $\mathbb{S}_{\alpha,\beta} : \mathbb{R}_{+} \to \mathscr{L}(\Omega)$ is called a $\beta$-times integrated $\alpha$-resolvent operator function  of an $(\alpha,\beta)$-resolvent operator function {\rm{(ROF)}} if the following conditions are satisfied:

\begin{tabular}{cl}
{\rm{(A)}} & $\mathbb{S}_{\alpha,\beta}(\cdot)$ is strongly continuous on $\mathbb{R}_{+}$\bigskip and $\mathbb{S}_{\alpha,\beta}(0)=g_{\beta+1}(0)I$;\\\bigskip
{\rm{(B)}} & $\mathbb{S}_{\alpha,\beta}(s) \mathbb{S}_{\alpha,\beta}(t)= \mathbb{S}_{\alpha,\beta}(t) \mathbb{S}_{\alpha,\beta}(s)$ for all $t,s \geq 0$;\\\bigskip
{\rm{(C)}} & the function equation $
\mathbb{S}_{\alpha,\beta}(s) I_t^{\alpha} \mathbb{S}_{\alpha,\beta}(t) - I_s^{\alpha} \mathbb{S}_{\alpha,\beta}(s)\mathbb{S}_{\alpha,\beta}(t) $\\
& $=g_{\beta+1}(s) I_t^{\alpha} \mathbb{S}_{\alpha,\beta}(t) - g_{\beta+1}(t) I_s^{\alpha} \mathbb{S}_{\alpha,\beta}(s)$ for all $t,s \geq 0$.
\end{tabular}
\end{definition}

The generator $\mathcal{A}$ of $\mathbb{S}_{\alpha,\beta}$ is defined by
\begin{equation}
D(\mathcal{A}):= \left\{x \in \Omega: \lim_{t \to 0^{+}} \frac{\mathbb{S}_{\alpha,\beta}(t)\, x - g_{\beta+1}(t)\, x}{g_{\alpha+\beta+1}(t)} \,\, {\rm{exists}}    \right\}
\end{equation}
and 
\begin{equation}
\mathcal{A}\,x := \lim_{t \to 0^{+}} \frac{\mathbb{S}_{\alpha,\beta}(t)\, x - g_{\beta+1}(t)\, x}{g_{\alpha+\beta+1}(t)}\, , \quad x \in D(\mathcal{A}),
\end{equation}
where $g_{\alpha+\beta+1}(t):= \dfrac{t^{\alpha+\beta}}{\Gamma(\alpha+\beta)}$ ($\alpha+\beta>0$).

It is known that for $\mathcal{A}\in G(\widetilde{M},\beta )$ there exists exactly one $(\alpha ,\beta )$-resolvent operator function $\mathbb{S}_{\alpha ,\beta }(t):\Omega\rightarrow \Omega$ for $t\geq 0$ such that $-\mathcal{A}$ is the infinitesimal operator and 
\begin{equation*}
\left\Vert \mathbb{S}_{\alpha ,\beta }(t)\right\Vert\leq  \widetilde{M}\,e^{\beta t},\text{ for }t\geq 0.
\end{equation*}

Throughout the paper we shall assume the conditions 1, 2 and assumption {\bf A}. Moreover, we shall use the notation $0\leq t_{0}<t_{1}<\cdots <t_{p}\leq t_{0}+a$, $a>0$; $J:=[t_{0},t_{0}+a]$; 
\begin{equation*}
M:=\sup_{t\in \lbrack 0,a]}\left\Vert \mathbb{S}_{\alpha ,\beta }(t)\right\Vert
\end{equation*}
and $X:=C_{1-\gamma }(J,\Omega)$.

Also, we shall assume that there exists the operator $B$ with $(B)=\Omega$ given by the formula 
\begin{equation*}
B:=\left( \mathrm{I}+\sum_{k=1}^{p}C_{k}I_{0^{+}}^{1-\gamma }\mathbb{S}_{\alpha ,\beta }(t_{k}-t_{0})\right) ^{-1}
\end{equation*}
where $\mathrm{I}$ is the identity operator on $\Omega$.

Let $\left( a,b\right) $ $\left( -\infty \leq a<b\leq \infty \right) $ be a finite interval {\rm{(or infinite)}} of the real line $\mathbb{R}$ and let $\alpha >0$. Also let $\psi \left( x\right) $ be an increasing and positive monotone function on $\left( a,b\right] ,$ having a continuous derivative $\psi ^{\prime }\left( x\right)$ {\rm{(we denote first derivative as $\dfrac{d}{dx}\psi(x)=\psi'(x)$)}} on $\left( a,b\right) $. The left-sided fractional integral of a function $f$ with respect to a function $\psi $ on $ \left[ a,b\right] $ is defined by \cite{ze,ze5}
\begin{equation}\label{eq7}
I_{a+}^{\alpha ;\psi }f\left( x\right) =\frac{1}{\Gamma \left( \alpha \right) }\int_{a}^{x}\psi ^{\prime }\left( s\right) \left( \psi \left( x\right) -\psi \left( s\right) \right) ^{\alpha -1}f\left( s\right) ds.
\end{equation}

On the other hand, let $n-1<\alpha <n$ with $n\in \mathbb{N},$ let $I'=\left[a,b\right] $ be an interval such that $-\infty \leq a<b\leq \infty $ and let $f,\psi \in C^{n}\left[ a,b\right] $ be two functions such that $\psi $ is increasing and $\psi ^{\prime }\left( x\right) \neq 0,$ for all $x\in I'$. The left-sided $\psi -$Hilfer fractional derivative $^{H}\mathbb{D}_{a+}^{\alpha ,\beta ;\psi }\left( \cdot \right) $ of a function $f$ of order $\alpha $ and type $0\leq \beta \leq 1,$ is defined by \cite{ze,ze5}
\begin{equation}\label{eq8}
^{H}\mathbb{D}_{a+}^{\alpha ,\beta ;\psi }f\left( x\right) =I_{a+}^{\beta \left( n-\alpha \right) ;\psi }\left( \frac{1}{\psi ^{\prime }\left( x\right) }\frac{d}{dx}\right) ^{n}I_{a+}^{\left( 1-\beta \right) \left( n-\alpha \right) ;\psi }f\left( x\right),
\end{equation}
where $I^{\alpha}_{a+}(\cdot)$ is $\psi$-Riemann-Liouville fractional integral. 

To study our problem Eq.(\ref{eq.4}), first we shall need the following linear problem, 
\begin{equation}\label{eq.5}
\left\{ 
\begin{array}{rcl}
{}^H\mathbb{D}_{t_{0}^{+}}^{\alpha ,\beta }u(t)+\mathcal{A}u(t) & 
= & g(t),\,\,t\in J/\{t_{0}^{+}\} \\ 
I_{t_{0^{+}}}^{1-\gamma }u(t_{0}^{+}) & = & x%
\end{array}%
\right. 
\end{equation}%
and the following definition.

We will introduce two fundamental results for the investigation of the main results of this paper, namely the Banach contraction theorem and the Gronwall Inequality.

\begin{theorem}{\rm \cite{ze1,ze2}} \label{ban}\textsc{(Banach contraction principle).} Let $\left( X,d\right) $ be a generalized complete metric space. Assume that $\Omega :X\rightarrow X$ is a strictly contractive operator with the Lipschitz constant $L<1$. If there exists a nonnegative integer $k$ such that $d\left( \Omega ^{k+1},\Omega ^{k}\right) <\infty $ for some $x\in X$, then the following are true:
\begin{enumerate}
\item The sequence $\left\{ \Omega ^{k}x\right\} $ converges to a point $x^{\ast }$ of $\Omega$;

\item $x^{\ast }$ is the unique fixed point of $\Omega $ in $\Omega ^{\ast }=\left\{ y\in X/d\left( \Omega ^{k}x,y\right) <\infty \right\}$;

\item If $y\in X^{\ast }$, then $d\left( y,x^{\ast }\right) \leq \dfrac{1}{1-L}d\left( \Omega y,y\right)$.
\end{enumerate}
\end{theorem}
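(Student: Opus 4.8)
The plan is to exploit the fact that, although the generalized metric $d$ may take the value $+\infty$, it still obeys the triangle inequality; consequently the argument follows the classical Banach scheme once attention is restricted to orbits along which consecutive distances are finite. First I would fix the point $x\in X$ for which $d(\Omega^{k+1}x,\Omega^{k}x)<\infty$ and set $x_n:=\Omega^{n}x$. Applying strict contractivity repeatedly gives, for every $n\ge k$,
\[
d(\Omega^{n+1}x,\Omega^{n}x)\le L^{\,n-k}\,d(\Omega^{k+1}x,\Omega^{k}x),
\]
so that all consecutive distances from index $k$ onward are finite and, since $L<1$, decay geometrically.

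Next I would show that $\{\Omega^{n}x\}_{n\ge k}$ is Cauchy. For $m>n\ge k$ the triangle inequality yields
\[
d(\Omega^{m}x,\Omega^{n}x)\le\sum_{j=n}^{m-1}d(\Omega^{j+1}x,\Omega^{j}x)\le\frac{L^{\,n-k}}{1-L}\,d(\Omega^{k+1}x,\Omega^{k}x),
\]
whose right-hand side tends to $0$ as $n\to\infty$. By completeness the sequence converges to some $x^{*}\in X$, which is conclusion 1. To see that $x^{*}$ is a fixed point I would note that for $n$ large $d(x^{*},\Omega^{n}x)$ is finite, so contractivity gives $d(\Omega x^{*},\Omega^{n+1}x)\le L\,d(x^{*},\Omega^{n}x)\to0$; combining this with $\Omega^{n+1}x\to x^{*}$ through the triangle inequality forces $d(\Omega x^{*},x^{*})=0$, i.e.\ $\Omega x^{*}=x^{*}$.

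For conclusion 2 I would first record that $x^{*}\in X^{*}=\{y:d(\Omega^{k}x,y)<\infty\}$, which follows by letting $m\to\infty$ in the Cauchy estimate with $n=k$. If $y\in X^{*}$ is any fixed point, then $d(x^{*},y)\le d(x^{*},\Omega^{k}x)+d(\Omega^{k}x,y)<\infty$, and the fixed-point property with contractivity gives $d(x^{*},y)=d(\Omega x^{*},\Omega y)\le L\,d(x^{*},y)$; finiteness of $d(x^{*},y)$ and $L<1$ force $d(x^{*},y)=0$, so $y=x^{*}$. Finally, for conclusion 3, given $y\in X^{*}$ I would write $d(y,x^{*})\le d(y,\Omega y)+d(\Omega y,x^{*})$, use $x^{*}=\Omega x^{*}$ and contractivity to bound $d(\Omega y,x^{*})\le L\,d(y,x^{*})$, and rearrange. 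The only genuine obstacle, as opposed to routine computation, is the bookkeeping forced by the extended-valued metric: each time I invoke contractivity to pass from $d(a,b)$ to $d(\Omega a,\Omega b)\le L\,d(a,b)$, or rearrange an inequality of the form $d\le Ld+c$, I must first certify that the relevant distance is finite, since neither step is meaningful when $d=+\infty$. Isolating the set $X^{*}$ on which all distances to $\Omega^{k}x$ stay finite is precisely what legitimizes these manipulations and constitutes the conceptual heart of the generalized principle.
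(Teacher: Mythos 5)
Your proof is correct, but there is nothing in the paper to compare it against: the paper states this theorem as a known tool (it is the Diaz--Margolis alternative of the Banach contraction principle for metrics allowed to take the value $+\infty$), citing \cite{ze1,ze2}, and gives no proof of its own. On its own merits, your argument is the standard and complete one for this result: geometric decay $d(\Omega^{n+1}x,\Omega^{n}x)\le L^{n-k}d(\Omega^{k+1}x,\Omega^{k}x)$ for $n\ge k$, the telescoping/geometric-series estimate giving the Cauchy property and hence convergence to $x^{*}$, passage to the limit with one application of contractivity to get $\Omega x^{*}=x^{*}$, and then the finiteness bookkeeping on $X^{*}$ to justify the cancellation steps in the uniqueness proof and in the estimate $d(y,x^{*})\le\frac{1}{1-L}d(\Omega y,y)$. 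The single point you leave implicit is the finiteness of $d(y,x^{*})$ for $y\in X^{*}$ in conclusion 3: it follows from $d(y,x^{*})\le d(y,\Omega^{k}x)+d(\Omega^{k}x,x^{*})<\infty$, using that you already established $x^{*}\in X^{*}$; since this is exactly the bookkeeping obligation you flag in your closing remark, it is a matter of writing one line rather than a gap in the argument.
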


\begin{theorem}{\rm \cite{17,ze10}} \label{gronwalltheorem}\textsc{(Gronwall theorem).} Let $u,v$ be two integrable functions and $g$ a continuous function, it domain $[a,b]$. Let $ \psi \in C^1(\Omega,\mathbb{R})$  an increasing function such that $\psi^{\prime }(t) \neq 0$, $\forall t \in \Omega$. Assume that:  
\begin{enumerate}
\item $u$ and $v$ are nonnegative;
\item $y$ is nonnegative and nondecreasing.
\end{enumerate}

If 
\begin{equation*}
u(t)\leq v(t)+g(t)\int_{a}^{b}\mathbf{Q}_{\psi }^{\mu }\left( t,s\right) u(s)\,{%
\mbox{d}}s,
\end{equation*}%
then 
\begin{equation*}
u(t)\leq v(t)+\int_{a}^{b}\sum_{k=1}^{\infty }\frac{[g(t)\xi (\mu
)]^{k}}{\Gamma (\mu k)}\mathbf{Q}_{\psi }^{k\mu }\left( t,s\right) v(s)\,{%
\mbox{d}}s,
\end{equation*}%
$\forall t\in \Omega$ and $\mathbf{Q}_{\psi }^{k\mu }\left( t,s\right) :=\psi ^{\prime }\left( s\right) \left( \psi \left( t\right) -\psi \left( s\right) \right) ^{k\mu -1}.$
\end{theorem}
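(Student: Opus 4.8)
The plan is to prove the inequality by iterating the integral operator and identifying each iterate in closed form through the Beta-function semigroup property of the $\psi$-kernel. First I would record that the relevant integral is of Volterra type (upper limit $t$, since $\mathbf{Q}_{\psi}^{\mu}(t,s)$ is only meaningful for $s\le t$), and introduce the operator
\[
\mathcal{B}\phi(t):=g(t)\int_{a}^{t}\mathbf{Q}_{\psi}^{\mu}(t,s)\,\phi(s)\,\mathrm{d}s ,
\]
so that the hypothesis becomes $u\le v+\mathcal{B}u$. Because $g$ is nonnegative and nondecreasing and the kernel $\mathbf{Q}_{\psi}^{\mu}(t,s)=\psi'(s)\bigl(\psi(t)-\psi(s)\bigr)^{\mu-1}$ is nonnegative on $a\le s\le t$ (here the monotonicity of $\psi$ and $\psi'\neq 0$ are used), the operator $\mathcal{B}$ is order preserving on nonnegative functions. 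Substituting the hypothesis into itself $n$ times then gives
\[
u(t)\le \sum_{k=0}^{n-1}\mathcal{B}^{k}v(t)+\mathcal{B}^{n}u(t),\qquad \mathcal{B}^{0}v:=v ,
\]
and the problem reduces to evaluating $\mathcal{B}^{k}v$ and controlling the remainder $\mathcal{B}^{n}u$.

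The key step is the induction claim
\[
\mathcal{B}^{k}v(t)\le \int_{a}^{t}\frac{[\,g(t)\,\Gamma(\mu)\,]^{k}}{\Gamma(k\mu)}\,\mathbf{Q}_{\psi}^{k\mu}(t,s)\,v(s)\,\mathrm{d}s ,
\]
which in particular identifies the constant in the statement as $\xi(\mu)=\Gamma(\mu)$. In passing from $k$ to $k+1$ I would use $g(s)\le g(t)$ for $s\le t$ to extract the power of $g$, apply Fubini to interchange the order of integration over the simplex $a\le \tau\le s\le t$, and evaluate the resulting inner integral
\[
\int_{\tau}^{t}\psi'(s)\bigl(\psi(t)-\psi(s)\bigr)^{\mu-1}\bigl(\psi(s)-\psi(\tau)\bigr)^{k\mu-1}\,\mathrm{d}s
\]
by the substitution $w=\psi(s)$, which turns it into a Beta integral equal to $\bigl(\psi(t)-\psi(\tau)\bigr)^{(k+1)\mu-1}B(\mu,k\mu)$. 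The identity $B(\mu,k\mu)=\Gamma(\mu)\Gamma(k\mu)/\Gamma((k+1)\mu)$ then cancels against the inductive constant and advances the claim. This Beta-function computation with the $\psi$-dependent kernel is the technical heart of the argument.

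Finally I would pass to the limit $n\to\infty$. Since $g$ is continuous on the compact interval $[a,b]$ it is bounded, so $[\,g(t)\Gamma(\mu)\,]^{n}/\Gamma(n\mu)\to 0$ because $\Gamma(n\mu)$ eventually dominates any geometric factor; combined with the integrability of $u$ this forces $\mathcal{B}^{n}u(t)\to 0$. Summing the series and interchanging summation and integration, which is legitimate by monotone convergence since every term is nonnegative, yields
\[
u(t)\le v(t)+\int_{a}^{t}\sum_{k=1}^{\infty}\frac{[\,g(t)\,\xi(\mu)\,]^{k}}{\Gamma(k\mu)}\,\mathbf{Q}_{\psi}^{k\mu}(t,s)\,v(s)\,\mathrm{d}s ,
\]
which is the asserted inequality. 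The main obstacle I anticipate is the bookkeeping in the Beta-integral induction together with the rigorous justification that the remainder term vanishes and that sum and integral may be exchanged; the order-preserving property of $\mathcal{B}$, guaranteed by the sign and monotonicity hypotheses on $g$ and on the kernel, is precisely what lets each iteration step preserve the inequality.
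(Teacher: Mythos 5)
The paper contains no proof of this theorem: it is imported as a known result from the cited references \cite{17,ze10} (typos and all), so there is no internal argument to compare against. Your reconstruction is correct and is essentially the proof given in those sources: iterate the Volterra operator $\mathcal{B}\phi(t)=g(t)\int_a^t\mathbf{Q}_{\psi}^{\mu}(t,s)\phi(s)\,ds$, identify $\mathcal{B}^{k}v$ by induction through the Beta integral
\begin{equation*}
\int_{\tau}^{t}\psi'(s)\bigl(\psi(t)-\psi(s)\bigr)^{\mu-1}\bigl(\psi(s)-\psi(\tau)\bigr)^{k\mu-1}\,ds
= B(\mu,k\mu)\,\bigl(\psi(t)-\psi(\tau)\bigr)^{(k+1)\mu-1},
\end{equation*}
using $g(s)\le g(t)$ and $B(\mu,k\mu)=\Gamma(\mu)\Gamma(k\mu)/\Gamma((k+1)\mu)$, then let the remainder vanish because $\Gamma(n\mu)$ outgrows any geometric factor. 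You have also correctly repaired the statement's defects: the integral must be the Volterra one $\int_a^t$ rather than $\int_a^b$, the monotonicity hypothesis written for ``$y$'' is the one needed for $g$, and the unexplained constant is $\xi(\mu)=\Gamma(\mu)$. One step worth tightening: to conclude $\mathcal{B}^{n}u(t)\to 0$, integrability of $u$ should be routed through the same inductive kernel bound applied to $u$, namely $\mathcal{B}^{n}u(t)\le \frac{[g(t)\Gamma(\mu)]^{n}}{\Gamma(n\mu)}\int_a^t\mathbf{Q}_{\psi}^{n\mu}(t,s)\,u(s)\,ds$ together with $(\psi(t)-\psi(s))^{n\mu-1}\le\max\{1,(\psi(b)-\psi(a))^{n\mu-1}\}$; stated that way the remainder estimate is airtight rather than heuristic.
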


\begin{lemma}{\rm \cite{ze10}} \label{gronlema} \textsc{(Gronwall lemma)} Under the hypotheses of {\rm Theorem \ref{gronwalltheorem}}, let $v$ be a non-decreasing function on $\Omega$. Then, we have 
\begin{equation*}
u(t)\leq v(t)\mathbb{E}_{\mu }\left( g(t)\Gamma (\mu )[(\psi (t)-\psi
(a))^{\mu }]\right) 
\end{equation*}%
$t\in \Omega$, where $\mathbb{E}_{\mu }(\cdot )$ is the Mittag-Leffler function with one parameter.
\end{lemma}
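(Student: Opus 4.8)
The plan is to deduce this lemma directly from the conclusion of Theorem \ref{gronwalltheorem}, using the extra monotonicity hypothesis on $v$ to collapse the infinite series into a single Mittag-Leffler function. First I would invoke Theorem \ref{gronwalltheorem}, whose conclusion already furnishes
\[
u(t)\leq v(t)+\int_{a}^{t}\sum_{k=1}^{\infty}\frac{[g(t)\xi(\mu)]^{k}}{\Gamma(\mu k)}\,\mathbf{Q}_{\psi}^{k\mu}(t,s)\,v(s)\,ds,
\]
where the effective upper limit is $t$, since the kernel $\mathbf{Q}_{\psi}^{k\mu}(t,s)=\psi'(s)\bigl(\psi(t)-\psi(s)\bigr)^{k\mu-1}$ requires $s\leq t$. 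Because $v$ is nonnegative and non-decreasing, for every $s\in[a,t]$ we have $v(s)\leq v(t)$; as all summands are nonnegative this estimate may be inserted term by term and the factor $v(t)$ pulled outside the integral.

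Next I would carry out the kernel integration. With the substitution $w=\psi(t)-\psi(s)$, $dw=-\psi'(s)\,ds$, one obtains for each $k\geq 1$ (so that $k\mu>0$)
\[
\int_{a}^{t}\mathbf{Q}_{\psi}^{k\mu}(t,s)\,ds=\int_{a}^{t}\psi'(s)\bigl(\psi(t)-\psi(s)\bigr)^{k\mu-1}\,ds=\frac{\bigl(\psi(t)-\psi(a)\bigr)^{k\mu}}{k\mu}.
\]
The interchange of the summation and the integration is legitimate by monotone convergence, the integrand being a series of nonnegative terms. Substituting this evaluation and applying the Gamma identity $k\mu\,\Gamma(k\mu)=\Gamma(k\mu+1)$ turns the bound into
\[
u(t)\leq v(t)\left(1+\sum_{k=1}^{\infty}\frac{[g(t)\,\xi(\mu)]^{k}\,\bigl(\psi(t)-\psi(a)\bigr)^{k\mu}}{\Gamma(k\mu+1)}\right).
\]

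With the identification $\xi(\mu)=\Gamma(\mu)$, the parenthesised expression is exactly the series $\sum_{k=0}^{\infty} z^{k}/\Gamma(\mu k+1)$ defining the one-parameter Mittag-Leffler function, evaluated at $z=g(t)\,\Gamma(\mu)\bigl(\psi(t)-\psi(a)\bigr)^{\mu}$, the $k=0$ term supplying the leading $1$. This delivers precisely
\[
u(t)\leq v(t)\,\mathbb{E}_{\mu}\bigl(g(t)\,\Gamma(\mu)[(\psi(t)-\psi(a))^{\mu}]\bigr),
\]
as claimed. The only genuinely delicate point is the bookkeeping that aligns the reindexed series with the Mittag-Leffler definition, namely verifying $k\mu\,\Gamma(k\mu)=\Gamma(k\mu+1)$ and matching the constant $\xi(\mu)=\Gamma(\mu)$ so that every factor falls into place; the kernel integration and the monotonicity reduction are both routine.
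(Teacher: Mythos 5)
Your proof is correct and is essentially the standard derivation of this lemma: the paper itself offers no proof (the lemma is simply quoted from \cite{ze10}), and the argument given in that reference is precisely yours---bound $v(s)\le v(t)$ by monotonicity, pull $v(t)$ out, integrate the kernel term by term via the substitution $w=\psi(t)-\psi(s)$, apply $k\mu\,\Gamma(k\mu)=\Gamma(k\mu+1)$, and resum into the one-parameter Mittag-Leffler series. You also correctly resolved the two misprints in the paper's statement of Theorem \ref{gronwalltheorem} that would otherwise block the deduction: the upper integration limit must be read as $t$ rather than $b$, and the undefined constant $\xi(\mu)$ must be $\Gamma(\mu)$ for the bound to assemble into $\mathbb{E}_{\mu}\left(g(t)\Gamma(\mu)(\psi(t)-\psi(a))^{\mu}\right)$.
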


\begin{definition} 
A function $u: J \to \Omega$ is said to be a classical solution to the problem {\rm Eq.(\ref{eq.4})}
\begin{enumerate}
    \item $u$ is continuous on $J$ and continuously differentiable on $J/\{t_0^{+}\}$;
    
    \item $ {}^H \mathbb{D}^{\alpha,\beta}_{t_0^{+}} u(t) + \mathcal{A}u(t) = g(t), \,{\rm for}\, t \in J/\{t_0^{+}\}$;\
    
    \item $I_{t_{0^{+}}}^{1-\gamma} u(t_0^{+}) = x $.
\end{enumerate}
\end{definition}

To study problem Eq.(\ref{eq.4}), we shall need the following theorem.

\begin{theorem}\label{teo1} 
Let $y: J \to \Omega$ be Lipschitz continuous on $J$ and $x \in D(\mathcal{A})$. Then, the Cauchy problem {\rm Eq.(\ref{eq.5})} has exactly one classical solution, denoted by $u$, given by the formula
\begin{equation*}
u(t) = \mathbb{S}_{\alpha,\beta}(t-t_0) x + \int_{t_0^{+}}^t K_{\alpha} (t-s) y(s) \, {\rm{d}}s, \quad t \in J.
\end{equation*}
\end{theorem}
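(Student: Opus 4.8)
The plan is to treat the theorem in two stages: first construct the classical solution by direct verification of the explicit formula, then establish uniqueness via the Gronwall Lemma. Throughout, I identify the source term of Eq.(\ref{eq.5}) with the function $y$ appearing in the solution formula, so that $g=y$.

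For the existence part I would proceed in three steps. Step one is regularity: with $x \in D(\mathcal{A})$ and $y$ Lipschitz continuous on $J$, the first summand $\mathbb{S}_{\alpha,\beta}(t-t_0)x$ is strongly continuous by property (A) and is smooth off $t_0$ because $x$ lies in the domain of the generator, while the convolution $\int_{t_0^{+}}^{t} K_{\alpha}(t-s)\,y(s)\,ds$ inherits continuous differentiability on $J/\{t_0^{+}\}$ from the Lipschitz regularity of $y$ (the usual splitting $y(s)=y(t)+(y(s)-y(t))$ makes the difference quotient converge). This yields condition 1 of the definition of classical solution. Step two is the initial condition: applying $I_{t_0^{+}}^{1-\gamma}$ to $u$ and letting $t \to t_0^{+}$, the convolution term vanishes because it is integrated over a degenerate interval and $K_{\alpha}$ is integrable at the origin, while the short-time behaviour of the resolvent family encoded in property (A), $\mathbb{S}_{\alpha,\beta}(0)=g_{\beta+1}(0)I$, combined with the normalization of the Riemann--Liouville integral, selects exactly the value $x$; this gives condition 3.

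Step three, verifying ${}^H\mathbb{D}_{t_0^{+}}^{\alpha,\beta}u(t)+\mathcal{A}u(t)=y(t)$, is the technical heart of the proof and the step I expect to be the main obstacle. For the homogeneous part I would use that $-\mathcal{A}$ is the infinitesimal generator of $\mathbb{S}_{\alpha,\beta}$, so that $\mathbb{S}_{\alpha,\beta}(\cdot-t_0)x$ solves the homogeneous equation and contributes nothing to the right-hand side. For the convolution part I would substitute the definition Eq.(\ref{eq8}) of the Hilfer derivative, with its two fractional integrals of orders $(1-\beta)(n-\alpha)$ and $\beta(n-\alpha)$ sandwiching the classical derivative, and then invoke the functional equation (C) of the resolvent operator to commute $I_t^{\alpha}$ past $\mathbb{S}_{\alpha,\beta}$; the $g_{\beta+1}$ terms in (C) are precisely what, after the fractional-integration bookkeeping, collapses to the forcing $y(t)$ while the remaining pieces reconstruct $-\mathcal{A}u(t)$. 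The delicate point is matching the integral orders so that the $\psi$-Hilfer operator applied to $K_{\alpha}\ast y$ reproduces $y$ plus an $\mathcal{A}$-term; this requires careful use of the semigroup law $I^{a}I^{b}=I^{a+b}$ for Riemann--Liouville integrals together with identity (C).

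For uniqueness, suppose $u_1,u_2$ are two classical solutions and set $w=u_1-u_2$. Then $w$ solves the homogeneous problem ${}^H\mathbb{D}_{t_0^{+}}^{\alpha,\beta}w+\mathcal{A}w=0$ with $I_{t_0^{+}}^{1-\gamma}w(t_0^{+})=0$, so by the construction above $w$ coincides with its own resolvent representation. Estimating in the space $X=C_{1-\gamma}(J,\Omega)$ with the uniform bound $\|\mathbb{S}_{\alpha,\beta}(t)\|\le M$ on $J$ produces an inequality of the form
\[
\|w(t)\| \le C\int_{t_0}^{t}(t-s)^{\alpha-1}\,\|w(s)\|\,ds ,
\]
to which the Gronwall Lemma (Lemma \ref{gronlema}) applies with $v\equiv 0$, forcing $\|w\|\equiv 0$. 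Hence $u_1=u_2$ and the classical solution is unique, completing the proof.
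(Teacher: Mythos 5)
The paper never proves Theorem \ref{teo1}: it is stated as a preparatory tool (used later inside the proof of Theorem \ref{teo65}) and is implicitly imported from the resolvent-family literature \cite{Chuang10,ze6,ze7}. So there is no in-paper argument to compare yours against; it has to stand on its own, and as written it has two genuine gaps.

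The first gap is in existence. The entire content of the theorem is the verification that ${}^H\mathbb{D}^{\alpha,\beta}_{t_0^{+}}$ applied to $\int_{t_0^{+}}^{t}K_{\alpha}(t-s)\,y(s)\,ds$ yields $y(t)$ minus $\mathcal{A}$ applied to that convolution, and you explicitly defer exactly this computation (``the delicate point is matching the integral orders''). Announcing that conditions (C) and the semigroup law $I^{a}I^{b}=I^{a+b}$ will make everything collapse is a plan, not a proof; nothing in your text shows the bookkeeping actually closes. Likewise, the assertion that $\mathbb{S}_{\alpha,\beta}(\cdot-t_0)x$ ``solves the homogeneous equation'' is not immediate for a $\beta$-times \emph{integrated} resolvent family: conditions (A)--(C) and the generator definition give an integrated identity carrying the inhomogeneity $g_{\beta+1}(t)x$, and in the Hilfer setting one must also track the weighted singular layer $t^{\gamma-1}$ (note $\mathbb{S}_{\alpha,\beta}(0)=g_{\beta+1}(0)I$ is $0$, not $I$, when $\beta>0$) before the initial condition $I^{1-\gamma}_{t_0^{+}}u(t_0^{+})=x$ and the identity ${}^H\mathbb{D}^{\alpha,\beta}_{t_0^{+}}\mathbb{S}_{\alpha,\beta}(t-t_0)x=-\mathcal{A}\,\mathbb{S}_{\alpha,\beta}(t-t_0)x$ can be claimed.

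The second gap is that your uniqueness argument is circular, and its fallback estimate is unavailable. You set $w=u_1-u_2$ and say $w$ ``coincides with its own resolvent representation''; but at that stage the representation formula is only known to define \emph{one particular} solution --- that \emph{every} classical solution satisfies it is precisely the uniqueness assertion being proved. (Indeed, if $w$ did satisfy the representation with zero data, then $w\equiv 0$ outright and the Gronwall step would be vacuous.) Nor can the inequality $\left\Vert w(t)\right\Vert \leq C\int_{t_0}^{t}(t-s)^{\alpha-1}\left\Vert w(s)\right\Vert ds$ be derived directly from the differential equation: producing it requires moving $\mathcal{A}w$ to the right-hand side and bounding $\left\Vert \mathcal{A}w(s)\right\Vert$ by $C\left\Vert w(s)\right\Vert$, which fails because $\mathcal{A}$ is an unbounded closed operator. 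The standard repair is a different argument altogether: for fixed $t$, consider $s\mapsto \mathbb{S}_{\alpha,\beta}(t-s)\,w(s)$ (or its appropriate fractionally integrated analogue), differentiate in $s$, and use the commutation of $\mathbb{S}_{\alpha,\beta}$ with $\mathcal{A}$ on $D(\mathcal{A})$ together with condition (C) to show this function is constant, which propagates the zero initial datum forward and gives $w\equiv 0$. That step is what your proof is missing.
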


On the other hand, a function $u \in \Lambda$ satisfying the integral equation
\begin{eqnarray}\label{mild}
u(t) &=& \mathbb{S}_{\alpha,\beta}(t-t_0) B u_0 + \int_{t_0^{+}}^{t} K_{\alpha}(t - s) f(s,u(s),u(b_1(s)),\cdots,u(b_r(s))) ds \\
&-& \mathbb{S}_{\alpha,\beta}(t-t_0) B \sum_{k=1}^p c_k I_{t_0^{+}}^{1 -\gamma}  \int_{t_0^{+}}^{t_k} K_{\alpha}(t_k - s) f(s,u(s),u(b_1(s)),\cdots,u(b_r(s))) ds \nonumber
\end{eqnarray}
is said to be a mild solution of the fractional functional differential nonlocal evolution Cauchy problem Eq.(\ref{eq.4}) satisfying the condition Eq.(\ref{eq.41}), where $ K_{\alpha }\left( t\right) =t^{\alpha -1}G_{\alpha }\left( t\right)$, \\$G_{\alpha }\left( t\right) =\displaystyle\int_{0}^{\infty }\alpha \theta M_{\alpha }\left( \theta \right) \mathcal{A}\left( t^{\alpha }\theta \right) d\theta $, $\mathbb{S}_{\alpha ,\beta }\left( t\right) =I_{\theta }^{\beta \left( 1-\alpha \right) }K_{\alpha}\left( t\right)$, $0< \alpha\leq 1$ and $0\leq \beta\leq 1$. For more details see \cite{ze6,ze7} and references therein.

\begin{rem}\label{rem1}
A mild solution of the nonlocal fractional Cauchy problem Eq.(\ref{eq.4}) satisfying the condition Eq.(\ref{eq.41}). Indeed, observe that by Eq.(\ref{mild})
\begin{eqnarray}\label{A1}
&& u(t_0) = \mathbb{S}_{\alpha,\beta}(0) B u_0 - \mathbb{S}_{\alpha,\beta}(0) B \sum_{k=1}^p c_k I_{0^{+}}^{1-\gamma} \times  \nonumber \\
&&\times\int_{t_0}^t K_{\alpha}(t_k-s) 
f(s,u(s),u(b_1(s)),\cdots,u(b_r(s)))ds
\end{eqnarray}
and
\begin{eqnarray}\label{A2}
u(t_{i}) &=&\mathbb{S}_{\alpha ,\beta }(t_{i}-t_{0})Bu_{0}+\int_{t_{0}^{+}}^{t_{i}}K_{\alpha
}(t_{i}-s)f(s,u(s),u(b_{1}(s)),\cdots ,u(b_{r}(s)))ds\nonumber \\
&&-\mathbb{S}_{\alpha ,\beta }(t_{i}-t_{0})B\sum_{k=1}^{p}C_{k}I_{t_{0}^{+}}^{1-\gamma
}\times   \\
&&\times\int_{t_{0}^{+}}^{t_{k}}K_{\alpha }(t_{k}-s)f(s,u(s),u(b_{1}(s)),\cdots
,u(b_{r}(s)))\,{ds.}  \nonumber
\end{eqnarray}

From {\rm Eq.(\ref{A1})} and {\rm Eq.(\ref{A2})} and the definition of operator $B$ we obtain the formula
\begin{eqnarray}
&&I_{t_{0}^{+}}^{1-\gamma
}u(t_{0})+\sum_{i=1}^{p}c_{i}I_{t_{0}^{+}}^{1-\gamma }u(t_{i})  
\notag\\ &=&\left( I_{t_{0}^{+}}^{1-\gamma }\mathbb{S}_{\alpha ,\beta
}(0)+\sum_{i=1}^{p}c_{i}I_{t_{0}^{+}}^{1-\gamma }\mathbb{S}_{\alpha ,\beta
}(t_{i}=t_{0})\right) Bu_{0}  \nonumber \\
&&-\left( I_{t_{0}^{+}}^{1-\gamma }\mathbb{S}_{\alpha ,\beta
}(0)+\sum_{i=1}^{p}c_{i}I_{t_{0}^{+}}^{1-\gamma }\mathbb{S}_{\alpha ,\beta
}(t_{i}=t_{0})\right) B\times   \nonumber \\
&&\times \sum_{k=1}^{p}C_{k}I_{t_{0}^{+}}^{1-\gamma
}\int_{t_{0}^{+}}^{t_{k}}K_{\alpha }(t_{k}-s)f(s,u(s),u(b_{1}(s)),\cdots
,u(b_{r}(s)))ds  \nonumber \\
&&+\sum_{i=1}^{p}c_{i}I_{t_{0}^{+}}^{1-\gamma
}\int_{t_{0}^{+}}^{t_{i}}K_{\alpha }(t_{i}-s)f(s,u(s),u(b_{1}(s)),\cdots
,u(b_{r}(s)))\,{ds.}\nonumber \\
&&=u_{0}.
\end{eqnarray}%
\end{rem}

Thus we have that the mild solution given by Eq.(\ref{eq.4}) satisfies the condition given by Eq.(\ref{eq.41}), so it is well defined.
\section{Existence and Uniqueness of mild solution}

In this section, we will investigate the existence and uniqueness of mild solutions for the fractional functional differential evolution equations introduced by the Hilfer fractional derivative and using the Banach contraction principle.
\begin{theorem}\label{teo2} If 
\begin{enumerate}
    \item $f: J \times \Omega^{r+1} \to \Omega$ is continuous with respect to the first variable on $J$, $b_i: J \to \mathbb{N}\,\,(i=1,2,\cdots,r)$ are continuous on $J$ and there is $L > 0$ such that 
\begin{equation}
\left\Vert f\left( s,z_{0},z_{1},\cdots ,z_{r}\right) -f\left( s,\overline{z}%
_{0},\overline{z}_{1},\cdots ,\overline{z}_{r}\right) \right\Vert
\leq L\sum_{i=0}^{r}\left\Vert z_{i}-\overline{z}%
_{i}\right\Vert _{C_{1-\gamma }}
\end{equation}
for $s \in J$, $z_i,\overline{z}_i \in \Omega$, $i=0,1,\cdots,r$;

    \item $(r+1)MLa^{2} \left(1 + M \left\Vert B\right\Vert \widetilde{C} \sum_{k=1}^p |c_k| \right) < 1$;

    \item $u_0 \in \Omega$.
\end{enumerate}
Then, the problem {\rm Eq.(\ref{eq.4})} has a unique mild solution.
\end{theorem}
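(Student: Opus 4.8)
The plan is to recast the problem as a fixed-point equation on the Banach space $X = C_{1-\gamma}(J,\Omega)$ and to apply the Banach contraction principle (Theorem \ref{ban}). Define the operator $\mathcal{T}: X \to X$ by declaring $(\mathcal{T}u)(t)$ to be exactly the right-hand side of the mild-solution formula Eq.(\ref{mild}),
\begin{align*}
(\mathcal{T}u)(t) &= \mathbb{S}_{\alpha,\beta}(t-t_0) B u_0 + \int_{t_0^{+}}^{t} K_{\alpha}(t - s) f(s,u(s),u(b_1(s)),\cdots,u(b_r(s)))\, ds \\
&\quad - \mathbb{S}_{\alpha,\beta}(t-t_0) B \sum_{k=1}^p c_k I_{t_0^{+}}^{1-\gamma} \int_{t_0^{+}}^{t_k} K_{\alpha}(t_k - s) f(s,u(s),u(b_1(s)),\cdots,u(b_r(s)))\, ds.
\end{align*}
A fixed point of $\mathcal{T}$ is, by Eq.(\ref{mild}), precisely a mild solution of Eq.(\ref{eq.4})--Eq.(\ref{eq.41}), so existence and uniqueness of the fixed point deliver the theorem.

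First I would verify that $\mathcal{T}$ is well defined, that is, $\mathcal{T}u \in X$ whenever $u \in X$. This rests on the continuity of $f$ in its first variable and of the delay functions $b_i$ (hypothesis 1), on the strong continuity and the uniform bound $\|\mathbb{S}_{\alpha,\beta}(\tau)\| \leq M$ on $[0,a]$, and on a bound for the kernel $K_\alpha$; since $I_{t_0^{+}}^{1-\gamma}$ carries a continuous integrand to a continuous function, each of the three terms lies in $C_{1-\gamma}(J,\Omega)$.

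The heart of the argument is the contraction estimate. For $u,v \in X$ the term $\mathbb{S}_{\alpha,\beta}(t-t_0)Bu_0$ is independent of the unknown and cancels, so
$$(\mathcal{T}u)(t) - (\mathcal{T}v)(t) = \int_{t_0^{+}}^{t} K_\alpha(t-s)\,\Delta f(s)\, ds - \mathbb{S}_{\alpha,\beta}(t-t_0)B\sum_{k=1}^p c_k\, I_{t_0^{+}}^{1-\gamma}\int_{t_0^{+}}^{t_k}K_\alpha(t_k-s)\,\Delta f(s)\, ds,$$
where $\Delta f(s)$ abbreviates the difference of the two nonlinearities. The Lipschitz hypothesis gives $\|\Delta f(s)\| \leq L\sum_{i=0}^r \|u(b_i(s)) - v(b_i(s))\|_{C_{1-\gamma}}$, and in the weighted norm each summand is controlled by $\|u-v\|_{C_{1-\gamma}}$, producing the factor $(r+1)L$. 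I would then multiply by $t^{1-\gamma}$, take the supremum over $J$, and bound the two integral blocks using $\|\mathbb{S}_{\alpha,\beta}\| \leq M$, the kernel bound absorbed into the constant $\widetilde{C}$, and the length of $J$ (which yields the powers of $a$). Collecting constants gives
$$\|\mathcal{T}u - \mathcal{T}v\|_{C_{1-\gamma}} \leq (r+1)MLa^2\Big(1 + M\|B\|\widetilde{C}\sum_{k=1}^p|c_k|\Big)\|u - v\|_{C_{1-\gamma}}.$$
By hypothesis 2 the coefficient is strictly less than $1$, so $\mathcal{T}$ is a strict contraction and Theorem \ref{ban} furnishes its unique fixed point $u^{\ast} \in X$, the desired unique mild solution.

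The step I expect to be the main obstacle is this contraction estimate, specifically tracking the weight $t^{1-\gamma}$ through both the convolution with $K_\alpha$ and the nonlocal Riemann--Liouville term so that the kernel singularity stays integrable and the constants assemble exactly into $(r+1)MLa^2(1 + M\|B\|\widetilde{C}\sum_{k}|c_k|)$. Extra care is required because the delays $b_i(s)$ move the point at which the weighted norm is evaluated, so one must bound $\|u(b_i(s)) - v(b_i(s))\|$ uniformly by $\|u-v\|_{C_{1-\gamma}}$ through the weight; this is precisely where the value of $\widetilde{C}$ and the exponent on $a$ originate.
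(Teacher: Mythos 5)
Your proposal follows essentially the same route as the paper: you define the same fixed-point operator from the mild-solution formula Eq.(\ref{mild}) on $C_{1-\gamma}(J,\Omega)$, cancel the $Bu_0$ term, run the same Lipschitz estimate through the local and nonlocal integral blocks to obtain exactly the contraction constant $(r+1)MLa^{2}\bigl(1+M\left\Vert B\right\Vert \widetilde{C}\sum_{k=1}^{p}|c_{k}|\bigr)$, and invoke the Banach contraction principle (Theorem \ref{ban}). Your additional attention to well-definedness of the operator and to how the weight $t^{1-\gamma}$ interacts with the kernel $K_{\alpha}$ and the delays $b_{i}$ is a point the paper glosses over, but the argument is otherwise the paper's own.
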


\begin{proof}
Consider the following operator
\begin{eqnarray}
(F\omega )(t) &:&=\mathbb{S}_{\alpha ,\beta }(t-t_{0})Bu_{0}-\mathbb{S}%
_{\alpha ,\beta }(t-t_{0})B\times   \nonumber \\
&&\times \sum_{k=1}^{p}C_{k}I_{t_{0}^{+}}^{1-\gamma
}\int_{t_{0}^{+}}^{t_{k}}K_{\alpha }(t_{k}-s)f(s,\omega (s),\omega
(b_{1}(s)),\cdots ,\omega (b_{r}(s)))ds  \nonumber \\
&&+\int_{t_{0}^{+}}^{t}K_{\alpha }(t-s)f(s,\omega (s),\omega
(b_{1}(s)),\cdots ,\omega (b_{r}(s)))ds
\end{eqnarray}%
with $t\in J$ and $\omega \in \Lambda$, being $\Lambda$ a Banach space.

Now, we go prove that in fact $F$ is a contraction, and so use Banach contraction theorem. So, we have
\begin{eqnarray*}
&&\left\Vert (F\omega )(t)-(F\widetilde{\omega })(t)\right\Vert   \nonumber \\
&=&-P_{\alpha ,\beta }(t-t_{0})B\sum_{k=1}^{p}C_{k}I_{t_{0}^{+}}^{1-\gamma
}\int_{t_{0}}^{t_{k}}\left[ 
\begin{array}{c}
f\left( s,\omega \left( s\right) ,\omega \left( b_{1}\left( s\right) \right)
,\cdots ,\omega \left( b_{r}\left( s\right) \right) \right)  \\ 
-f(s,\widetilde{\omega }(s),\widetilde{\omega }(b_{1}(s)),\cdots ,\overline{%
\omega }(b_{r}(s)))%
\end{array}%
\right] ds  \nonumber \\
&&+\int_{t_{0}}^{t}K_{\alpha }(t-s)\left[ 
\begin{array}{c}
f(s,\omega (s),\omega (b_{1}(s)),\cdots ,\omega (b_{r}(s))) \\ 
-f(s,\widetilde{\omega }(s),\widetilde{\omega }(b_{1}(s)),\cdots ,\overline{%
\omega }(b_{r}(s)))%
\end{array}%
\right] ds  \nonumber \\
&\leq &\sum_{k=1}^{p}|C_{k}|\left\Vert \mathbb{S}_{\alpha ,\beta
}(t-t_{0})\right\Vert \left\Vert B\right\Vert \left\Vert I_{t_{0}}^{1-\gamma }\right\Vert \times   \nonumber \\
&&\times \int_{t_{0}^{+}}^{t_{k}}\left\Vert K_{\alpha }(t_{k}-s)\right\Vert \left\Vert f(s,\widetilde{\omega }(s),\widetilde{\omega } (b_{1}(s)),\cdots ,\widetilde{\omega }(b_{r}(s)))\right\Vert ds  \nonumber \\
&&+\int_{t_{0}}^{t}\left\Vert K_{\alpha }(t-s)\right\Vert\left\Vert 
\begin{array}{c}
f(s,\omega (s),\omega (b_{1}(s)),\cdots ,\omega (b_{r}(s))) \\ 
-f(s,\widetilde{\omega }(s),\widetilde{\omega }(b_{1}(s)),\cdots ,\overline{\omega }(b_{r}(s)))
\end{array}%
\right\Vert ds  \nonumber \\
&\leq &\sum_{k=1}^{p}|C_{k}|M\left\Vert B\right\Vert 
\widetilde{C}\int_{t_{0}^{+}}^{t_{k}}M\,L\sum_{i=0}^{r}\left\Vert \omega -\widetilde{\omega }\right\Vert _{C_{1-\gamma }}ds+\int_{t_{0}^{+}}^{t}M%
\,L\sum_{i=0}^{r}\left\Vert \omega -\widetilde{\omega }\right\Vert
_{C_{1-\gamma }}ds  \nonumber \\
&=&\widetilde{C}M^{2}Lr\left\Vert \omega -\widetilde{\omega }\right\Vert
_{C_{1-\gamma }}\sum_{k=1}^{p}|C_{k}|\left\Vert B\right\Vert (t_{k}-t_{0})+L\,M\,r\left\Vert \omega -\widetilde{\omega }\right\Vert
_{C_{1-\gamma }}(t-t_{0})  \nonumber \\
&\leq &M^{2}aLr\left\Vert \omega -\widetilde{\omega }\right\Vert
_{C_{1-\gamma }}\left\Vert B\right\Vert \widetilde{C}%
\sum_{k=1}^{p}|C_{k}|+LMra\left\Vert \omega -\widetilde{\omega }\right\Vert
_{C_{1-\gamma }}  \nonumber \\
&=&LMra\left( 1+M\left\Vert B\right\Vert\widetilde{C}%
\sum_{k=1}^{p}|C_{k}|\right) \left\Vert \omega -\widetilde{\omega }%
\right\Vert _{C_{1-\gamma }}  \nonumber \\
&\leq &(1+r)MLa^{2}\left( 1+M\left\Vert B\right\Vert \widetilde{C%
}\sum_{k=1}^{p}|C_{k}|\right) \left\Vert \omega -\widetilde{\omega }%
\right\Vert _{C_{1-\gamma }}.
\end{eqnarray*}

Then, we have
\begin{equation}
\left\Vert F\omega -F\widetilde{\omega }\right\Vert _{C_{1-\gamma }}\leq  \widetilde{q}\left\Vert \omega -\widetilde{\omega }\right\Vert _{C_{1-\gamma }}  \label{star}
\end{equation}
where
\begin{equation*}
\widetilde{q}:=(1+r)MLa^{2}\left( 1+M\left\Vert B\right\Vert _{C_{1-\gamma }}%
\widetilde{C}\sum_{k=1}^{p}|C_{k}|\right) 
\end{equation*}
with $0 < \widetilde{q} < 1$.

Thus, we have that inequality (\ref{star}) satisfies the conditions of the Banach contraction theorem (see Theorem \ref{ban}). Therefore, we can ensure that there is a single $F$ operator fixed point in the space  $\Lambda$ and this point is the mild solution of the problem Eq.(\ref{eq.4}) satisfying the condition Eq.(\ref{eq.41}).
\end{proof}

\section{Existence and uniqueness of classical solutions}

Before beginning to investigate the results of this section, let us present the definition of the classical solution to the problem Eq.(\ref{eq.4}) satisfying the condition Eq.(\ref{eq.41}).

\begin{definition}
A function $u: J \to \Omega$ is said to be a classical solution of the fractional functional differential nonlocal evolution Cauchy problem {\rm Eq.(\ref{eq.4})} if
\begin{enumerate}
    \item $u$ is continuous on $J$ and continuously differentiable on $J/\{t_0^{+}\}$;
    
    \item $ {}^H \mathbb{D}^{\alpha,\beta}_{t_0^{+}} u(t) + \mathcal{A}u(t) = f(t,u(t),u(b_1(t)),\cdots, u(b_r(t))), \,{\rm for}\, t \in J/\{t_0^{+}\}$;
    
    \item $\displaystyle I_{t_{0^{+}}}^{1-\gamma} u(t_0) + \sum_{k=1}^p c_k   I_{t_{0^{+}}}^{1-\gamma} u(t_k)   = u_0 $.
\end{enumerate}
\end{definition}

The first result of this section, is a direct consequence of the $u$ solution being said classical, that is, let's investigate that if $u$ is classical from the problem Eq.(\ref{eq.4}) satisfying the condition Eq.(\ref{eq.41}), so it's mild.

\begin{theorem}\label{teo31}
Assume that $f: J \times \Omega^{r+1} \to \Omega$ is Lipschitz continuous on $J \times \Omega^{r+1}$. If $u$ is a classical solution to the problem {\rm Eq.(\ref{eq.4})} then $u$ is a mild solution of this problem.
\end{theorem}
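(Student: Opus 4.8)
The plan is to reduce the nonlocal nonlinear problem to the linear Cauchy problem Eq.(\ref{eq.5}) and then invoke the representation supplied by Theorem \ref{teo1}. First I would \emph{freeze} the nonlinearity along the given classical solution: set $g(t):=f(t,u(t),u(b_1(t)),\cdots,u(b_r(t)))$ and $x:=I_{t_0^{+}}^{1-\gamma}u(t_0^{+})$. By item (2) of the definition of a classical solution, $u$ then satisfies
\[
{}^{H}\mathbb{D}_{t_0^{+}}^{\alpha,\beta}u(t)+\mathcal{A}u(t)=g(t),\qquad t\in J/\{t_0^{+}\},
\]
together with $I_{t_0^{+}}^{1-\gamma}u(t_0^{+})=x$, so $u$ is a classical solution of the linear problem Eq.(\ref{eq.5}) with forcing term $g$ and initial value $x$.

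Next I would verify that $g$ is Lipschitz continuous on $J$, so that Theorem \ref{teo1} applies. This uses the assumed Lipschitz continuity of $f$ on $J\times\Omega^{r+1}$ combined with the regularity of $u$ (continuous on $J$, continuously differentiable on $J/\{t_0^{+}\}$) and of the delays $b_i$, whence each map $s\mapsto u(b_i(s))$ carries enough regularity for the composition to be Lipschitz. Applying Theorem \ref{teo1}, and using uniqueness of the classical solution of Eq.(\ref{eq.5}), identifies $u$ with the representation
\[
u(t)=\mathbb{S}_{\alpha,\beta}(t-t_0)\,x+\int_{t_0^{+}}^{t}K_{\alpha}(t-s)\,g(s)\,{\rm d}s,\qquad t\in J.
\]

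It then remains to eliminate $x$ in favour of $Bu_0$ by means of the nonlocal condition, item (3). I would evaluate the displayed formula at each $t=t_k$, apply $I_{t_0^{+}}^{1-\gamma}$, and substitute into $I_{t_0^{+}}^{1-\gamma}u(t_0)+\sum_{k=1}^{p}c_k I_{t_0^{+}}^{1-\gamma}u(t_k)=u_0$. Since $I_{t_0^{+}}^{1-\gamma}u(t_0)=x$, collecting the terms carrying $x$ reconstitutes exactly the operator $\mathrm{I}+\sum_{k=1}^{p}c_k I_{t_0^{+}}^{1-\gamma}\mathbb{S}_{\alpha,\beta}(t_k-t_0)=B^{-1}$, giving
\[
B^{-1}x=u_0-\sum_{k=1}^{p}c_k I_{t_0^{+}}^{1-\gamma}\int_{t_0^{+}}^{t_k}K_{\alpha}(t_k-s)\,g(s)\,{\rm d}s,
\]
so that $x=Bu_0-B\sum_{k=1}^{p}c_k I_{t_0^{+}}^{1-\gamma}\int_{t_0^{+}}^{t_k}K_{\alpha}(t_k-s)g(s)\,{\rm d}s$. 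Substituting this back into the representation above and recalling the definition of $g$ produces precisely Eq.(\ref{mild}), which is the assertion; this last computation is essentially the reverse of the one carried out in Remark \ref{rem1}.

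The two integral manipulations are routine. The step I expect to be the main obstacle is the Lipschitz continuity of $g$: a classical solution is assumed only continuous (not Lipschitz) up to the initial time $t_0^{+}$, and the delays $b_i$ are only continuous, so one must argue carefully — or slightly strengthen the hypotheses — to guarantee that the composition $g$ is genuinely Lipschitz and hence that Theorem \ref{teo1} is applicable. A secondary point requiring care is the consistent interpretation of the operator $I_{t_0^{+}}^{1-\gamma}$ acting on the operator-valued term $\mathbb{S}_{\alpha,\beta}(t_k-t_0)$, so that it coincides exactly with the factor appearing in the definition of $B$.
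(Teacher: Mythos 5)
Your proposal follows essentially the same route as the paper's proof: freeze the nonlinearity along $u$, invoke the representation formula of Theorem \ref{teo1} to get $u(t)=\mathbb{S}_{\alpha,\beta}(t-t_0)I_{t_0^{+}}^{1-\gamma}u(t_0)+\int_{t_0^{+}}^{t}K_{\alpha}(t-s)f(s,u(s),\ldots)\,{\rm d}s$, then use the nonlocal condition together with the definition of $B$ to solve for $I_{t_0^{+}}^{1-\gamma}u(t_0)$ and substitute back, recovering Eq.(\ref{mild}). The one issue you flag --- that the Lipschitz continuity of the frozen forcing term $g$ is not automatic from mere continuity of $u$ and the $b_i$ --- is a genuine subtlety, but it is one the paper's own proof silently skips over (it simply asserts the integral equation Eq.(\ref{eq3})), so your treatment is, if anything, more careful than the original.
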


\begin{proof}
Since $u$ is a classical solution to the problem Eq.(\ref{eq.4}), $u \in \Lambda$ and $u$ satisfies the integral equation, 
\begin{equation}\label{eq3}
u(t) = \mathbb{S}_{\alpha,\beta}(t-t_0) I_{t_{0^{+}}}^{1-\gamma} u(t_0) + \int_{t_0^{+}}^{t} K_{\alpha}(t-s) f(s,u(s),u(b_1(s)),\cdots, u(b_r(s))) ds
\end{equation}
with $t \in J$.

Note that through Eq.(\ref{eq3}) and the condition 3 from the condition of Eq.(\ref{eq.4}), is given by
\begin{eqnarray}
u_{0} &=&I_{t_{0^{+}}}^{1-\gamma
}u(t_{0})+\sum_{k=1}^{p}C_{k}I_{t_{0^{+}}}^{1-\gamma }\mathbb{S}_{\alpha
,\beta }\left( t_{k}-t_{0}\right) \times    \\
&&\times \left( I_{t_{0^{+}}}^{1-\gamma
}u(t_{0})\int_{t_{0}^{+}}^{t_{k}}K_{\alpha
}(t_{k}-s)f(s,u(s),u(b_{1}(s)),\cdots ,u(b_{r}(s)))ds\right) \nonumber
\end{eqnarray}
or it can also be written as
\begin{eqnarray}\label{eq181}
u_{0} &=&\left( {I}+\sum_{k=1}^{p}C_{k}I_{t_{0^{+}}}^{1-\gamma }\mathbb{S}%
_{\alpha ,\beta }(t_{k}-t_{0})\right) I_{t_{0^{+}}}^{1-\gamma }u(t_{0}) 
\\
&&+\sum_{k=1}^{p}C_{k}I_{t_{0^{+}}}^{1-\gamma
}\int_{t_{0}^{+}}^{t_{k}}K_{\alpha }(t_{k}-s)f(s,u(s),u(b_{1}(s)),\cdots
,u(b_{r}(s)))ds.\nonumber
\end{eqnarray}

Applying the operator $B$ on both sides of Eq.(\ref{eq181}), we obtain
\begin{equation}\label{eq5}
I_{t_{0^{+}}}^{1-\gamma} u(t_0) = B u_0 - B \sum_{k=1}^p c_k I_{t_{0^{+}}}^{1-\gamma} \int_{t_0^{+}}^{t_k} K_{\alpha} (t_k-s) f(s,u(s),u(b_1(s)),\cdots, u(b_r(s)))ds.
\end{equation}

Then, we have by means of the Eq.(\ref{eq3}) and Eq.(\ref{eq5}) imply that $u$ satisfies Eq.(\ref{mild}), which complete the proof.
\end{proof}

To finish the section, we will investigate the uniqueness of classical solutions of the problem Eq.(\ref{eq.4}) satisfying the condition Eq.(\ref{eq.41}), through the following result.

\begin{theorem}\label{teo65}
Suppose that
\begin{enumerate}
    \item $f: J \times \Omega^{r+1} \to \Omega$, $b_i: J \to J (i=1,2,\cdots,r)$ 
are continuous on $J$ and there is $C > 0$ such that
\begin{equation}
\left\Vert f(s,z_{0},z_{1},\cdots ,z_{r})-f(s,\overline{z}_{0},\overline{z}_{1},\cdots ,\overline{z}_{r}\right\Vert \leq C\left(\left\vert s-\overline{s}\right\vert +\sum_{i=0}^{r}\left\Vert z_{i}-\overline{z}_{i}\right\Vert _{C_{1-\gamma }}\right) ;
\end{equation}
for $s,\overline{s}  \in J$, $z_i,\overline{z}_i \in \Omega$, $i=0,1,\cdots,r$;

    \item $(r+1)MCa^{2} \left(1 + M \left\Vert B \right\Vert \sum_{k=1}^p |c_k| \right) < 1$;
    
    \item $u_0 \in \Omega$.
    
    \item Then the fractional functional differential nonlocal evolution problem {\rm Eq.(\ref{eq.4})} has a unique mild solution denoted by $u$. Moreover, if $Bu_0 \in D(\mathcal{A})$ and  
\begin{equation}\label{eqiv}
B \int_{t_0^{+}}^{t_k} K_{\alpha} (t_k-s) f(s,u(s),u(b_1(s)),\cdots, u(b_r(s)))\, {\rm{d}}s \in  D(\mathcal{A})
\end{equation}
$k=1,2,\cdots,p$ and if there is $K > 0$ such that
\begin{equation}\label{eq222}
\left\Vert u(b_{i}(s))-u(b_{i}(\overline{s}))\right\Vert _{C_{1-\gamma
}}\leq \left\Vert u(s)-u(\overline{s})\right\Vert ,\text{ for }s,\overline{s}%
\in J
\end{equation}
then $u$ is the unique classical solution to the problem {\rm Eq.(\ref{eq.4})}.
\end{enumerate}
\end{theorem}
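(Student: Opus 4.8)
The plan is to split the statement into two parts: first produce a unique mild solution, and then upgrade it to a classical solution under the extra regularity hypotheses, with uniqueness of the classical solution following from uniqueness of the mild one. For the first part I would observe that hypothesis 1, restricted to a fixed first argument $s$, yields exactly the Lipschitz estimate required in Theorem \ref{teo2} with constant $L=C$, while hypothesis 2 is precisely the contraction condition of that theorem. Hence Theorem \ref{teo2} applies and produces a unique mild solution $u \in \Lambda$ satisfying Eq.(\ref{mild}). Writing $x := I_{t_0^{+}}^{1-\gamma}u(t_0)$, the computation leading to Eq.(\ref{eq5}) in the proof of Theorem \ref{teo31} shows $x = Bu_0 - \sum_{k=1}^{p} c_k I_{t_0^{+}}^{1-\gamma}\big[B\int_{t_0^{+}}^{t_k}K_{\alpha}(t_k-s) f(s,u(s),\dots)\,ds\big]$; since $D(\mathcal{A})$ is a linear subspace, the assumptions $Bu_0 \in D(\mathcal{A})$ together with Eq.(\ref{eqiv}) guarantee $x \in D(\mathcal{A})$. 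With this, Eq.(\ref{mild}) collapses to the linear representation $u(t) = \mathbb{S}_{\alpha,\beta}(t-t_0)\,x + \int_{t_0^{+}}^{t} K_{\alpha}(t-s)\,y(s)\,ds$, where $y(s) := f(s,u(s),u(b_1(s)),\dots,u(b_r(s)))$.

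The decisive step is to show that $y$ is Lipschitz continuous on $J$, for then Theorem \ref{teo1}, applied to the linear Cauchy problem Eq.(\ref{eq.5}) with data $x \in D(\mathcal{A})$ and forcing term $g=y$, furnishes a classical solution given by precisely that linear representation, which coincides with $u$. To bound $\|y(t)-y(\bar t)\|$ I would invoke the joint Lipschitz hypothesis 1 and control the arguments $u(b_i(t))-u(b_i(\bar t))$ via Eq.(\ref{eq222}), reducing everything to $\|u(t)-u(\bar t)\|$ and obtaining $\|y(t)-y(\bar t)\| \le C|t-\bar t| + C(1+r)\|u(t)-u(\bar t)\|_{C_{1-\gamma}}$. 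It therefore remains to prove that the mild solution $u$ is itself Lipschitz, and this is the main obstacle.

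The hard part is estimating $u(t)-u(\bar t)$ from Eq.(\ref{mild}) and closing it by Gronwall. The terms of the form $[\mathbb{S}_{\alpha,\beta}(t-t_0)-\mathbb{S}_{\alpha,\beta}(\bar t-t_0)]w$ are Lipschitz in $t$ precisely because $w \in D(\mathcal{A})$ (using $\tfrac{d}{dt}\mathbb{S}_{\alpha,\beta}(t)w=-\mathcal{A}\mathbb{S}_{\alpha,\beta}(t)w$ and the bound $\|\mathbb{S}_{\alpha,\beta}(t)\|\le M$), whereas the difference of the $K_\alpha$-convolutions must be handled through the splitting $\int_{t_0}^{\bar t}[K_{\alpha}(t-s)-K_{\alpha}(\bar t-s)]y(s)\,ds + \int_{\bar t}^{t}K_{\alpha}(t-s)y(s)\,ds$. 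The care here lies in the singular kernel $K_{\alpha}(t)=t^{\alpha-1}G_{\alpha}(t)$: using the uniform bound $\sup_{s\in J}\|y(s)\|<\infty$ (finite since $u$ is continuous and $f$ continuous), this splitting produces a free term of order $|t-\bar t|$ together with a memory term $\int_{t_0}^{t}\mathbf{Q}_{\psi}^{\mu}(t,s)\|u(s)-u(\bar s)\|\,ds$. Feeding the resulting inequality into the Gronwall Lemma \ref{gronlema} yields $\|u(t)-u(\bar t)\| \le A\,|t-\bar t|\,\mathbb{E}_{\mu}(\cdots)$, i.e. Lipschitz continuity of $u$, hence of $y$.

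Finally, with $u$ established as a classical solution via Theorem \ref{teo1}, uniqueness is immediate: by Theorem \ref{teo31} every classical solution of Eq.(\ref{eq.4}) is a mild solution, and the mild solution is unique by the first part; therefore $u$ is the unique classical solution.
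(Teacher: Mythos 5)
Your proposal is correct and follows essentially the same route as the paper's own proof: unique mild solution from Theorem \ref{teo2}, Lipschitz continuity of $u$ established by estimating the difference of mild-solution formulas and closing with the Gronwall Lemma \ref{gronlema}, Lipschitz continuity of the composite forcing term $y(t)=f(t,u(t),u(b_1(t)),\cdots,u(b_r(t)))$, application of Theorem \ref{teo1} to the associated linear problem, identification of its classical solution with $u$, and uniqueness via Theorem \ref{teo31} combined with Theorem \ref{teo2}. The only notable difference is that you explicitly justify the Lipschitz bound on the resolvent-difference terms through membership in $D(\mathcal{A})$ (which is precisely where the hypotheses $Bu_0\in D(\mathcal{A})$ and Eq.(\ref{eqiv}) enter), a step the paper's displayed estimate passes over without justification when it writes the factor $2Mh$.
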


\begin{proof}

Note that, Eq.(\ref{eq.4}) satisfying Eq.(\ref{eq.41}), admits a unique solution, since the conditions of Theorem \ref{teo2} are satisfied. On the other hand, it remains to be proved that $u$ is indeed a unique classical solution for Eq.(\ref{eq.4}). Consider
\begin{equation}\label{eq32}
N:=\underset{s\in J}{\max }\left\Vert f(s,u(s),u(b_{1}(s)),\cdots
,u(b_{r}(s)))\right\Vert  
\end{equation}
and note that
\begin{eqnarray}\label{eq221}
&&u(t+h)-u(t)  \nonumber \\
&=&\left( \mathbb{S}_{\alpha ,\beta }(t+h-t_{0})-\mathbb{S}(t-t_{0})\right)
Bu_{0}-\left( \mathbb{S}_{\alpha ,\beta }(t+h-t_{0})-\mathbb{S}%
(t-t_{0})\right) B\times   \nonumber \\
&&\times \sum_{k=1}^{p}C_{k}I_{t_{0^{+}}}^{1-\gamma
}\int_{t_{0}^{+}}^{t_{k}}K_{\alpha }(t_{k}-s)f(s,u(s),u(b_{1}(s)),\cdots
,u(b_{r}(s)))ds  \nonumber \\
&&+\int_{t_{0}^{+}}^{t_{0}+h}K_{\alpha }(t+h-s)f(s,u(s),u(b_{1}(s)),\cdots
,u(b_{r}(s)))ds  \nonumber \\
&&+\int_{t_{0}^{+}}^{t}K_{\alpha }(t-s)\left[ 
\begin{array}{c}
f(s,u(s+h),u(b_{1}(s+h)),\cdots ,u(b_{r}(s+h))) \\ 
-f(s,u(s),u(b_{1}(s)),\cdots ,u(b_{r}(s)))%
\end{array}%
\right] ds
\end{eqnarray}
for $t\in \lbrack t_{0},t_{0}+a]$, $h>0$ and $t+h\in (t_{0},t_{0}+a)$.

Consequently by Eq.(\ref{eq222}), Eq.(\ref{eq32}), Eq.(\ref{eq221}) and a condition 4, we obtain
\begin{eqnarray*}
&&\left\Vert u(t+h)-u(t)\right\Vert  \nonumber   \\
&=&\left\Vert 
\begin{array}{c}
\left( \mathbb{S}_{\alpha ,\beta }(t+h-t_{0})-\mathbb{S}_{\alpha ,\beta
}(t-t_{0})\right) Bu_{0}-(\mathbb{S}_{\alpha ,\beta }(t+h-t_{0})-\mathbb{S}_{\alpha ,\beta }(t-t_{0}))B\times  \\ 
\times \sum_{k=1}^{p}C_{k}I_{t_{0^{+}}}^{1-\gamma }\displaystyle\int_{t_{0}^{+}}^{t_{k}}K_{\alpha }(t_{k}-s)f(s,u(s),u(b_{1}(s)),\cdots ,u(b_{r}(s)))ds \\ 
+\displaystyle\int_{t_{0}^{+}}^{t_{0}+h}K_{\alpha }(t+h-s)f(s,u(s),u(b_{1}(s)),\cdots ,u(b_{r}(s)))ds \\ 
+\displaystyle\int_{t_{0}^{+}}^{t}K_{\alpha }(t-s)[f(s,u(s+h),u(b_{1}(s+h)),\cdots ,u(b_{r}(s+h))) \\ 
-f(s,u(s),u(b_{1}(s)),\cdots ,u(b_{r}(s)))]ds 
\end{array}%
\right\Vert  
\end{eqnarray*}

\begin{eqnarray}\label{eq34}
&\leq &\left( \left\Vert \mathbb{S}_{\alpha ,\beta }(t+h-t_{0})\right\Vert +\left\Vert \mathbb{S}_{\alpha ,\beta }(t-t_{0})\right\Vert \right) \left\Vert Bu_{0}\right\Vert +\left( \left\Vert \mathbb{S}_{\alpha ,\beta }(t+h-t_{0})\right\Vert +\left\Vert \mathbb{S}_{\alpha ,\beta }(t-t_{0})\right\Vert
\right) \left\Vert B\right\Vert \times  \nonumber \\
&&\times \sum_{k=1}^{p}C_{k}\left\Vert I_{t_{0^{+}}}^{1-\gamma }\right\Vert\int_{t_{0}^{+}}^{t_{k}}\left\Vert K_{\alpha
}(t_{k}-s)\right\Vert \left\Vert f(s,u(s),u(b_{1}(s)),\cdots
,u(b_{r}(s)))\right\Vert ds  \nonumber \\
&&+\int_{t_{0}^{+}}^{t_{0}+h}\left\Vert K_{\alpha }(t+h-s)\right\Vert
\left\Vert f(s,u(s),u(b_{1}(s)),\cdots
,u(b_{r}(s)))\right\Vert ds  \nonumber \\
&&+\int_{t_{0}^{+}}^{t}\left\Vert K_{\alpha }(t-s)\right\Vert \left\Vert 
\begin{array}{c}
f(s,u(s+h),u(b_{1}(s+h)),\cdots ,u(b_{r}(s+h))) \\ 
-f(s,u(s),u(b_{1}(s)),\cdots ,u(b_{r}(s)))%
\end{array}%
\right\Vert ds  \nonumber \\
&\leq &2Mh\left\Vert Bu_{0}\right\Vert +2M\widetilde{C}%
h\left\Vert B\right\Vert \sum_{k=1}^{p}C_{k}\times   \nonumber
\\
&&\times \int_{t_{0}^{+}}^{t_{k}}\left\Vert K_{\alpha
}(t_{k}-s)f(s,u(s),u(b_{1}(s)),\cdots ,u(b_{r}(s)))\right\Vert _{C_{1-\gamma
}}ds  \nonumber \\
&&+MNh+MC\int_{t_{0}^{+}}^{t}\left\Vert u(s+h)-u(s)\right\Vert _{C_{1-\gamma
}}ds+MCk\int_{t_{0}^{+}}^{t}\left\Vert u(s+h)-u(s)\right\Vert
ds  \nonumber \\
&&+\cdots +MCkr\int_{t_{0}^{+}}^{t}\left\Vert u(s+h)-u(s)\right\Vert
ds  \nonumber \\
&=&2Mh\left\Vert Bu_{0}\right\Vert +2M\widetilde{C}%
h\left\Vert B\right\Vert \sum_{k=1}^{p}C_{k}\times  \nonumber \\
&&\times\int_{t_{0}^{+}}^{t_{k}}\left\Vert K_{\alpha
}(t_{k}-s)f(s,u(s),u(b_{1}(s)),\cdots ,u(b_{r}(s)))\right\Vert ds  \nonumber \\
&&+MNh+MC(1+k+kr)\int_{t_{0}^{+}}^{t}\left\Vert u(s+h)-u(s)\right\Vert
ds  \nonumber \\
&=&\left[ 
\begin{array}{c}
2M\left\Vert Bu_{0}\right\Vert +2M\widetilde{C}\left\Vert
B\right\Vert \sum_{k=1}^{p}C_{k}\times  \\ 
\times \displaystyle\int_{t_{0}^{+}}^{t_{k}}\left\Vert K_{\alpha
}(t_{k}-s)f(s,u(s),u(b_{1}(s)),\cdots ,u(b_{r}(s)))\right\Vert ds+MN%
\end{array}%
\right] h  \nonumber \\
&&+MC(1+k+kr)\int_{t_{0}^{+}}^{t}\left\Vert u(s+h)-u(s)\right\Vert
ds  \nonumber \\
&=&\widetilde{\delta }h+MC(1+k+kr)\int_{t_{0}^{+}}^{t}\left\Vert
u(s+h)-u(s)\right\Vert ds
\end{eqnarray}
for $t\in \lbrack t_{0},t_{0}+a)$, $h>0$ and $t+h\in (t_{0},t_{0}+a)$, where 
\begin{eqnarray*}
&&\widetilde{\delta }:=2M\left\Vert Bu_{0}\right\Vert +MN2M
\widetilde{C}\left\Vert B\right\Vert \sum_{k=1}^{p}C_{k}\times  \nonumber \\
&&\times\int_{t_{0}^{+}}^{t_{k}}\left\Vert K_{\alpha
}(t_{k}-s)f(s,u(s),u(b_{1}(s)),\cdots ,u(b_{r}(s)))\right\Vert ds.
\end{eqnarray*}

Note that, for $0<\alpha <1$, $t\in \lbrack t_{0},t_{0}+a)$, Eq.(
\ref{eq34}) can rewritten in the following form 
\begin{eqnarray}\label{eq34*}
\left\Vert u(t+h)-u(t)\right\Vert &\leq& \widetilde{\delta }
h+MC(1+k+rk)\int_{t_{0}^{+}}^{t}\left\Vert u(s+h)-u(s)\right\Vert ds  \\
&\leq &\widetilde{\delta }h+MC(1+rk)t^{1-\alpha
}\int_{t_{0}^{+}}^{t}(t-s)^{\alpha -1} \left\Vert u(s+h)-u(s) \right\Vert ds 
\nonumber \\
&\leq &\widetilde{\delta }h+MC(1+rk)p\int_{t_{0}^{+}}^{t}(t-s)^{\alpha
-1}\left\Vert u(s+h)-u(s)\right\Vert ds.\nonumber
\end{eqnarray}%

By means of inequality (\ref{eq34*}) and of Gronwall inequality (see Lemma \ref{gronlema} ) we get 
\begin{eqnarray}
\left\Vert u(t+h)-u(t)\right\Vert _{C_{1-\gamma }} &\leq &\widetilde{\delta }%
h\mathbb{E}_{\alpha }\left[ MC(1+rk)\Gamma (\alpha )(t-s)^{\alpha }\right]  \nonumber \\
&\leq &\widetilde{\delta }h\mathbb{E}_{\alpha }\left[ MC(1+rk)\Gamma (\alpha )a^{\alpha }\right] 
\end{eqnarray}
for $t\in \lbrack t_{0},t_{0}+a)$, $h>0$, $t+h\in (t+0,t_{0}+a)$ and $\mathbb{E}_{\alpha }(\cdot )$ is the classical Mittag-Leffler function of a parameter.

Note that the continuity of $u$ and of $f$ on $J$ and $J\times\Omega^{k+1}$, respectively, implies that $t\rightarrow f(t,u(t),u(b_{1}(t)),\cdots ,u(b_{r}(t)))$ is Lipschitz continuous in the interval $J$. In this sense, we have to from this fact and the conditions of Theorem \ref{teo2} imply that, by Theorem \ref{teo1} the linear fractional Cauchy problem
\begin{equation}\label{eq35}	
{}^H\mathbb{D}_{t_{0}^{+}}^{\alpha ,\beta }v(t)+\mathcal{A}v(t)=f\left( t,u(t),u(b_{1}(t)),\cdots ,u(b_{r}(t))\right) ,\,\,t\in J/\{t_{0}\}  
\end{equation}%
and 
\begin{equation}\label{eq36}
I_{t_{0^{+}}}^{1-\gamma}v(t_{0})=u_{0}-\sum_{k=1}^{p}C_{k}I_{t_{0^{+}}}^{1-\gamma }u(t_{k})
\end{equation}%
has a unique solution $v$ such that 
\begin{equation}\label{eq37}
v(t)=\mathbb{S}_{\alpha ,\beta }(t-t_{0})I_{t_{0^{+}}}^{1-\gamma }v(t_{0})+\int_{t_{0}^{+}}^{t}K_{\alpha }(t-s)f(s,u(s),u(b_{1}(s)),\cdots ,u(b_{r}(s)))ds  
\end{equation}
with $t\in J$. Now, we shall show that 
\begin{equation}\label{eq38}
u(t)=v(t),\text{ }t\in J.  
\end{equation}

Note that by means of the Eq.(\ref{eq36}), remark \ref{rem1} and Eq.(\ref{mild}), we have 
\begin{eqnarray}
&&I_{t_{0^{+}}}^{1-\gamma }v(t_{0})=I_{t_{0^{+}}}^{1-\gamma
}u(t_{0})=Bu_{0}-B\sum_{k=1}^{p}C_{k}I_{t_{0^{+}}}^{1-\gamma
}\times  \nonumber \\
&&\times\int_{t_{0}^{+}}^{t_{k}}K_{\alpha }(t_{k}-s)f(s,u(s),u(b_{1}(s)),\cdots
,u(b_{r}(s)))ds.
\end{eqnarray}

Soon,
\begin{eqnarray}\label{eq39}
\mathbb{S}_{\alpha ,\beta }(t-t_{0})I_{t_{0^{+}}}^{1-\gamma }v(t_{0})&=&%
\mathbb{S}_{\alpha ,\beta }(t-t_{0})Bu_{0}-\mathbb{S}_{\alpha ,\beta
}(t-t_{0})B\times  \\
&&\times \sum_{k=1}^{p}C_{k}I_{t_{0^{+}}}^{1-\gamma
}\int_{t_{0}^{+}}^{t_{k}}K_{\alpha }(t_{k}-s)f(s,u(s),u(b_{1}(s)),\cdots
,u(b_{r}(s)))ds  \nonumber
\end{eqnarray}
$t\in J$. 

Next, from \textnormal{Eq.(\ref{eq37})}, \textnormal{Eq.(\ref{eq39})} and Eq.(\ref{mild}), we obtain
\begin{eqnarray}
&&v(t) =\mathbb{S}_{\alpha ,\beta }(t-t_{0})Bu_{0}-\mathbb{S}_{\alpha ,\beta }(t-t_{0})B\times   \nonumber
\\
&&\times \sum_{k=1}^{p}C_{k}I_{t_{0^{+}}}^{1-\gamma
}\int_{t_{0}^{+}}^{t_{k}}K_{\alpha }(t_{k}-s)f(s,u(s),u(b_{1}(s)),\cdots
,u(b_{r}(s)))ds  \nonumber \\
&&+\int_{t_{0}^{+}}^{t}K_{\alpha }(t-s)f(s,u(s),u(b_{1}(s)),\cdots
,u(b_{r}(s)))ds  \nonumber \\
&=&u\left( t\right) 
\end{eqnarray}
$t\in J$ and, therefore, Eq.(\ref{eq38}) holds.

Thus, we conclude that $u$ is the classical solution of Eq.(\ref{eq.4})-Eq.(\ref{eq.41}). Now suppose $u^{*}\neq u$ another classical solution of Eq.(\ref{eq.4})-Eq.(\ref{eq.41}) in the interval $J$. Through the Theorem \ref{teo31}, we have that $u^{*}$ is a solution to the problem Eq.(\ref{eq.4})-Eq.(\ref{eq.41}). On the other hand, the Theorem \ref{teo2} guarantees the uniqueness of a solution for Eq.(\ref{eq.4})-Eq.(\ref{eq.41}), so we conclude that $u=u^{*}$. Therefore, we conclude the result.
\end{proof}

\section{Concluding remarks}
In this paper, we investigate the existence and uniqueness of mild and classical solutions to the fractional functional differential evolution equation through the Cauchy contraction theorem and Gronwall inequality. We were able to present new results that actually contribute to the advancement and growth of the area. However, there are still some issues that deserve special mention, namely, investigating mild, strong, classical solutions to fractional differential equations introduced by means of the $\psi$-Hilfer fractional derivative \cite{ze,ze5}. But for such a success, it is necessary and sufficient condition to obtain an integral transform to investigate such solutions. Recently Jarad and Abdeljawad \cite{Jarad,15} introduced the Laplace transform with respect to another function, and the inverse version of the Laplace transform with respect to another function has yet to be presented. Studies in this sense have been investigated and certainly contributed greatly to a new path of problems to be investigated.

\section*{Acknowledgment}

JVCS acknowledges the financial support of a PNPD-CAPES (process number nº88882.305834/2018-01) scholarship of the Postgraduate Program in Applied Mathematics of IMECC-Unicamp.


\end{document}